\newtheorem{thm}{Theorem}
\newtheorem{ex}{Example}
\newtheorem{crl}{Corollary}
\newtheorem{dfn}{Definition}
\newtheorem{prop}{Proposition}
\newtheorem{st}{Statement}
\title{On Free Knots and Links}
\author{Vassily Olegovich Manturov}
 \def\R{{\mathbb R}}
 \def\Z{{\mathbb Z}}
\newcommand{\ZG}{\Z_{2}{\mathfrak{G}}}
\newcommand{\ZGG}{\tilde \Z_{2}{\mathfrak{G}}}
\begin{document}

\maketitle

\abstract{Both classical and virtual knots arise as formal Gauss
diagrams modulo some abstract moves corresponding to Reidemeister
moves. If we forget about both over/under crossings structure and
writhe numbers of knots modulo the same Reidemeister moves, we get a
dramatic simplification of virtual knots, which kills all classical
knots. However, many virtual knots survive after this
simplification.

We construct invariants of these objects and present their
applications to minimality problems of virtual knots as well as some
questions related to graph-links.

One can easily generalize these results for the orientable case and
apply them for solving non-invertibility problems.

The main idea behind these invariants is some geometrical
construction which reduces the general equivalence to the
equivalence only modulo Reidemeister - 2 move.

This paper is a sequel of the paper \cite{FreeKnots}. }

\section{Introduction}

In \cite{FreeKnots}, for a certain class objects (a terrific
simplification of virtual knots) we proved a theorem that the
equivalence questions of some diagrams can often be reduced to the
question of some very simple equivalence (using only Reidemeister 2
moves). To do that, we made difference between two types of
crossings: the ``odd'' ones and the ``even'' ones and created a
diagram-valued invariant of our objects (free knots). For some
diagrams which are ``irreducibly odd'' the value of the invariant
consists of the diagram itself, and it has been considered then only
up to second Reidemeister move.

This has a lot of corollaries (already described here or still to
come in forthcoming papers): for flat virtual knots (for virtual
knots and their generalizations see \cite{KaV}), their
non-triviality, non-equivalence, non-invertibility etc.

However, the main nerve of the construction was the notion of odd
crossing. Roughly speaking, we are taking a Gauss diagram and
forgetting all over/under information and all writhe numbers modulo
formal Reidemeister moves. Odd crossings are precisely those
corresponding to {\em odd chords of the Gauss diagram}, i.e., those
chords which intersect an odd number of other chords.

The notion of odd chord (odd crossing) is closely connected to the
notion of a non-orientable atom: starting from such a four-valent
(framed) graph, we may construct a checkerboard surface (see ahead)
which can be either orientable or non-orientable. The presence of
odd chords precisely means non-oreintability of the atoms.

So, the examples we have constructed and all non-trivial results we
have proved in \cite{FreeKnots} deal with only non-orientable atoms
and (virtual) knots corresponding to them.

By definition, the invariant constructed in \cite{FreeKnots}
``smoothes'' all even crossings, and for a diagram with orientable
atom we get the same value of the invariant as that of the unknot.

So, the simple non-triviality, non-invertibility, non-equivalence,
and minimality results hold for a class of objects with
non-orientable atoms. In particular, it has nothing to do with
classical knots.

The aim of the present paper is to find another ``oddness''
condition for the crossings allowing to use the techniques similar
to the one introduced in \cite{FreeKnots}.

We give  new non-trivial examples.

\section{Basic Notions}

By a {\em $4$-graph} we mean a topological space consisting of
finitely many components, each of which is either a circle or a
finite graph with all vertices having valency four.

A $4$-graph is {\em framed} if for each vertex of it, the four
emanating half-edges are split into two sets of edges called {\em
(formally) opposite}.

A {\em unicursal component} of a $4$-graph is either a free loop
component of it or an equivalence class of edges where two edges
$a$,$b$ are called equivalent if there is a sequence of edges
$a=a_{0},\dots, a_{n}=b$ and vertices $v_{1},\dots, v_{n}$ so that
$a_{i}$ and $a_{i+1}$ are opposite at $v_{i+1}$.

As an example of a free graph one may take the graph of a singular
link.

By a {\em free link} we mean an equivalence class of framed
$4$-valent graphs  modulo the following transformations. For each
transformation we assume that only one fixed fragment  of the graph
is being operated on (this fragment is to be depicted) or some
corresponding fragments of the chord diagram. The remaining part of
the graph or chord diagram are not shown in the picture; the pieces
of the chord diagram not containing chords participating in this
transformation, are depicted by punctured arcs. The parts of the
graph are always shown in a way such that the formal framing
(opposite edge relation) in each vertex coincides with the natural
opposite edge relation taken from  ${\bf R}^{2}$.

The first Reidemeister move is an addition/removal of a loop, see
Fig.\ref{1r}

\begin{figure}
\centering\includegraphics[width=200pt]{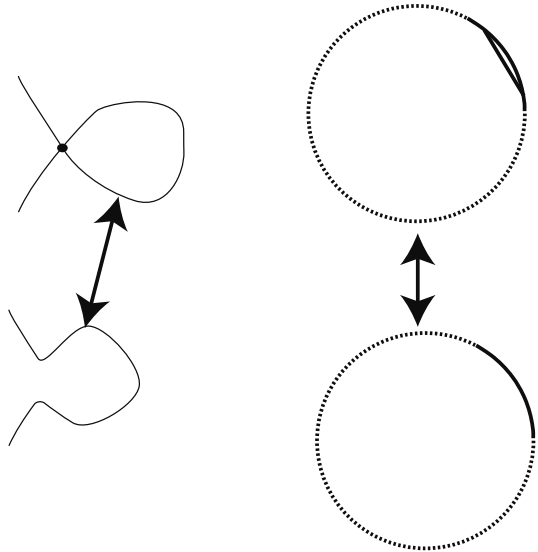}
\caption{Addition/removal of a loop on a graph and on a chord
diagram} \label{1r}
\end{figure}

The second Reidemeister move adds/removes a bigon formed by a pair
of edges which are adjacent in two edges, see Fig. \ref{2r}.

\begin{figure}
\centering\includegraphics[width=300pt]{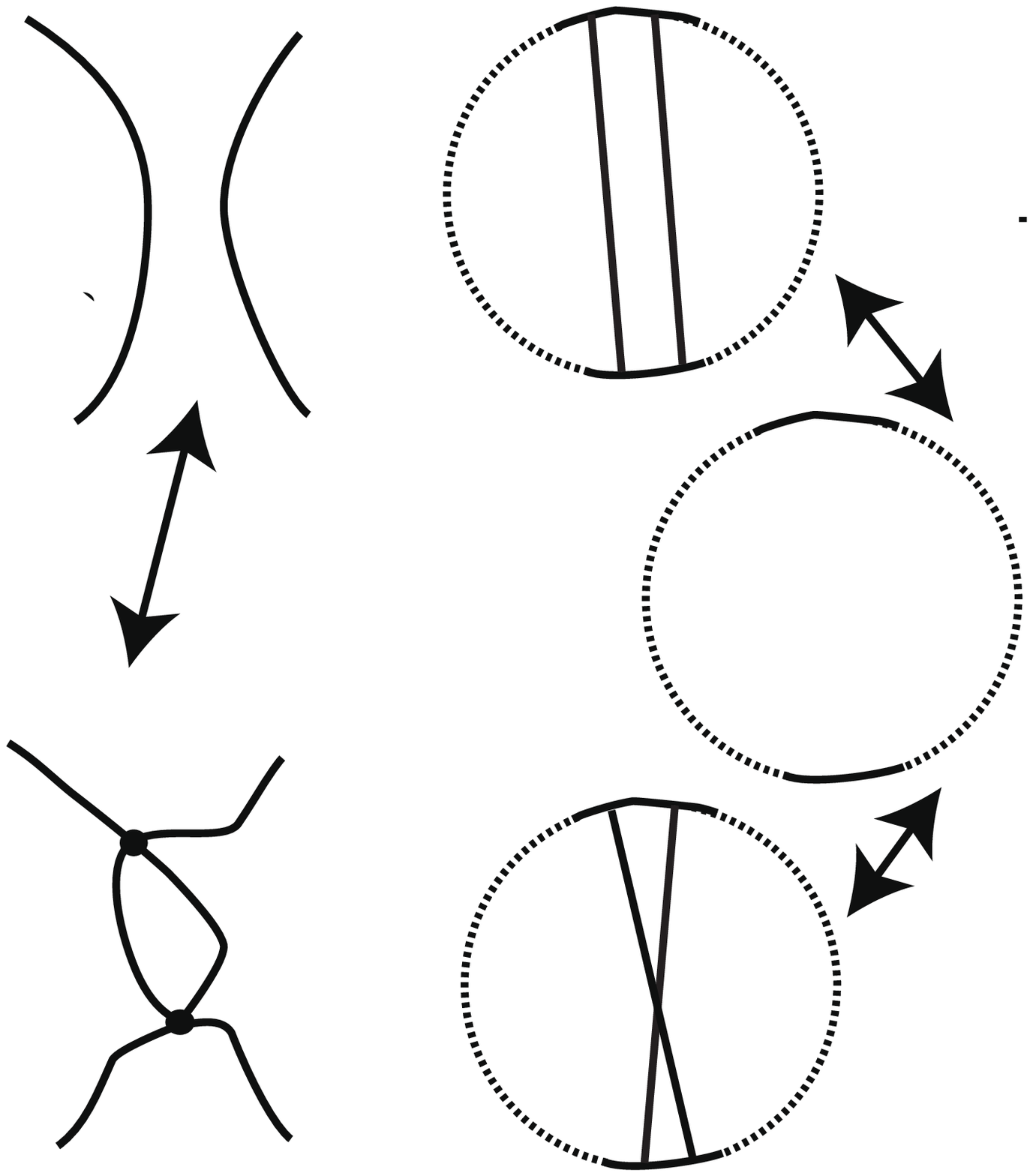} \caption{The second
Reidemeister move and two chord diagram versions of it} \label{2r}
\end{figure}

Note that the second Reidemeister move adding two vertices does not
impose any conditions on the edges it is applied to: we may take any
two two edges of the graph an connect them together as shown in Fig.
\ref{2r} to get two new crossings.

The third Reidemeister move is shown in Fig.\ref{3r}.

\begin{figure}
\centering\includegraphics[width=300pt]{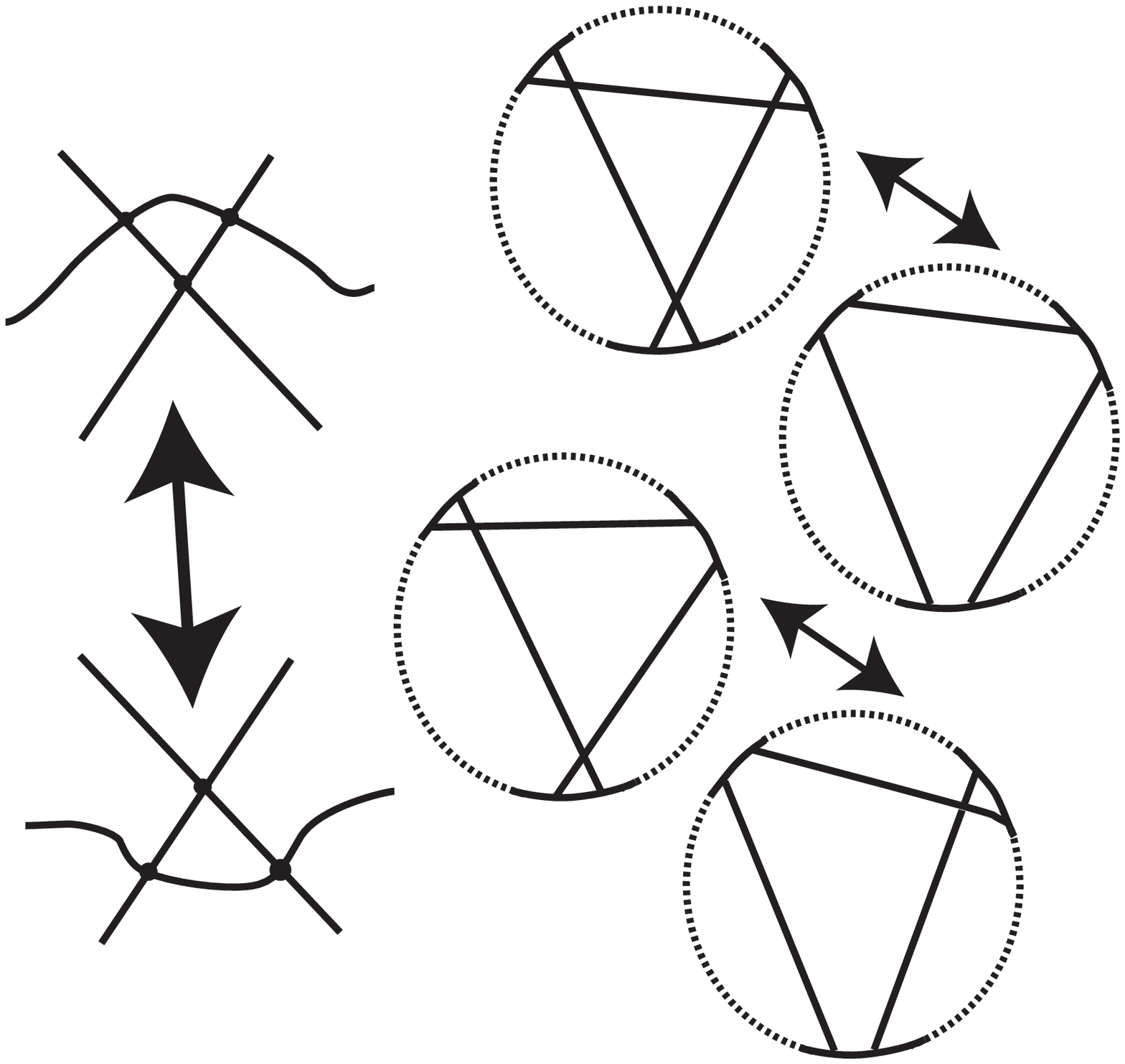} \caption{The third
Reidemeister move and its chord diagram versions} \label{3r}
\end{figure}

Note that each of these three moves applied to a framed graph,
preserves the number of unicursal components of the graph. Thus,
applying these moves to graphs with a unique unicursal cycle, we get
to graphs with a unique unicursal cycle.

A {\em free knot} is a free link with one unicursal component
(obviously, the number of unicursal component of a framed $4$-graph
is preserved under Reidemeister moves).

Free links are closely connected to  {\em flat virtual knots},
i.\,e., with equivalence classes of virtual knots modulo
transformation changing over/undercrossing structure. The latter are
equivalence classes of immersed curves in orientable $2$-surfaces
modulo homotopy and stabilization.

\subsection{Smoothings}

Here we introduce the notion of smoothing, we shall often use in the
sequel.

Let $G$ be a framed graph, let $v$ be a vertex of $G$ with four
incident half-edges $a,b,c,d$, s.t. $a$ is opposite to $c$ and $b$
is opposite to $d$ at $v$.

By {\em smoothing} of $G$ at $v$ we mean any of the two framed
$4$-graphs obtained by removing $v$ and repasting the edges as
$(a,b)$, $(c,d)$ or as $(a,d)$ $(b,c)$, see Fig. \ref{smooth}.

\begin{figure}
\centering\includegraphics[width=200pt]{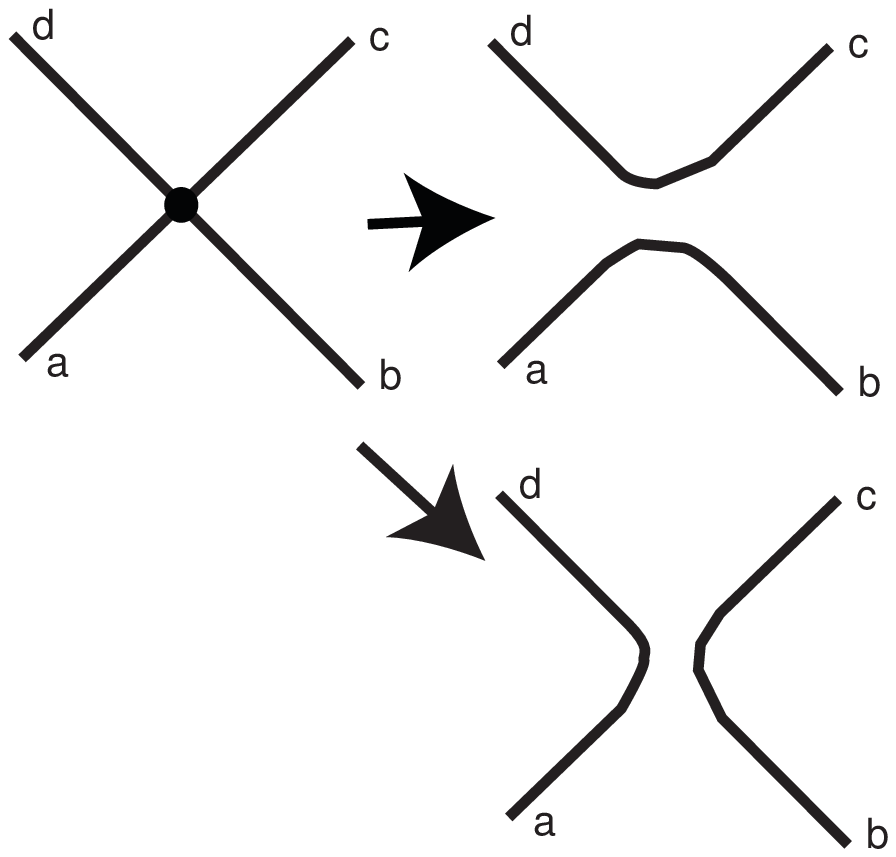} \caption{Two
smoothings of a vertex of for a framed graph} \label{smooth}
\end{figure}

Herewith, the rest of the graph (together with all framings at
vertices except $v$) remains unchanged.

We may then consider further smoothings of $G$ at {\em several}
vertices.

\section{Atoms and orientability. \\ The source-sink condition}

Our further strategy is as follows: in many situations, it is easier
to find {\em links} rather than {\em knots} with desired
non-triviality properties. So, we shall first define a map from free
$1$-compoent links to ${\bf Z}_{2}$-linear combinations of
$2$-component links, and then we shall study the latter by an
invariant similar to that constructed in \cite{FreeKnots}.

Ideologically, the first map is a simplified version of Turaev's
cobracket \cite{Turaev} which establishes a structure of Lie
coalgera on the set of curves immersed in $2$-surfaces (up to some
equivalence, the Lie {\em algebra} structure was introduced by
Goldman in a similar way).

We shall need Turaev's construction (Turaev's $\Delta$) to get a
$2$-component free link from a $1$-component one.

The second map takes a certain state sum for a $2$-component free
link, where we distinguish between two types of crossings, and
smooth only crossings of the first type. What should these ``two
types'' mean, will be discussed later.

In some sense, the invariant $[\cdot]$ of free knots constructed in
\cite{FreeKnots} is a diagrammatic extension of a terrifically
simplified Alexander polynomial (we forget about the variable and
signs taking ${\bf Z}_{2}$-coefficients). The invariant $\{\cdot\}$
suggested in the present paper is in the same sense an extension of
the terrifically simplified Kauffman bracket, but again we use
diagrams as coefficients.

Altogether, these two constructions provide an example of
non-trivial and minimal diagrams of free knots with orientable
atoms.

\begin{dfn}
An {\em atom} (originally introduced by Fomenko, \cite{Fom}) is a
pair $(M,\Gamma)$ consisting of a $2$-manifold $M$ and a graph
$\Gamma$ embedded in $M$ together with a colouring of $M\backslash
\Gamma$ in a checkerboard manner. An atom is called {\em orientable}
if the surface $M$ is orientable. Here $\Gamma$ is called the {\em
frame} of the atom, whence by {\em genus} (atoms and their genera
were also studied by Turaev~\cite{Turg}, and atom genus is also
called the Turaev genus~\cite {Turg}) ({\em Euler characteristic,
orientation}) of the atom we mean that of the surface $M$.
 \end{dfn}

Having  an atom $V$, one can construct a virtual link diagram out of
it as follows. Take a generic immersion of atom's frame into $\R^2$,
for which the formally opposite structure of edges coincides with
the opposite structure induced from the plane.

Put virtual crossings at the intersection points of images of
different edges and restore classical crossings at images of
vertices `as above'. Obviously, since we disregard virtual
crossings, the most we can expect is the well-definiteness up to
detours. However, this allows us to get different virtual link types
from the same atom, since for every vertex $V$ of the atom with four
emanating half-edges $a,b,c,d$ (ordered cyclically on the atom) we
may get two different clockwise-orderings on the plane of embedding,
$(a,b,c,d)$ and $(a,d,c,b)$. This leads to a move called {\em
virtualisation}.

 \begin{dfn}
By a {\em virtualisation} of a classical crossing of a virtual
diagram we mean a local transformation shown in
Fig.~\ref{virtualisation}.
 \end{dfn}

 \begin{figure} \centering\includegraphics[width=200pt]{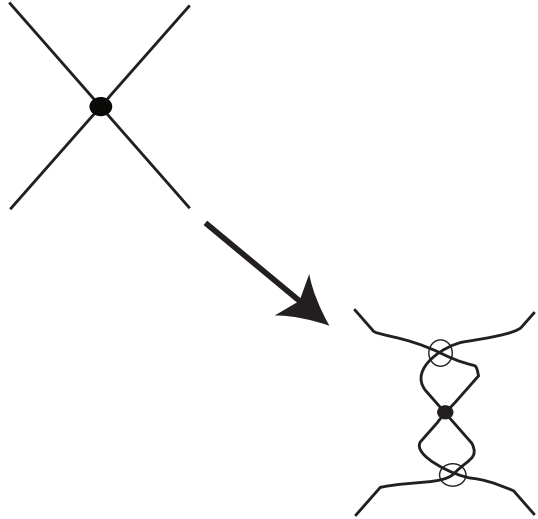}
  \caption{Virtualisation} \label{virtualisation}
 \end{figure}

The above statements summarise as

 \begin{prop}(see, e.g., {\em \cite{MyBook}}).
Let $L_1$ and $L_2$ be two virtual links obtained from the same atom
by using different immersions of its frame. Then $L_1$ differs from
$L_2$ by a sequence of \(detours and\/\) virtualisations.
 \end{prop}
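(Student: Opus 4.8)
The plan is to reconstruct both virtual link diagrams $L_1$ and $L_2$ from the combinatorial data of the atom $V$, and to show that the only freedom in the reconstruction is precisely accounted for by detours and virtualisations. First I would fix the frame $\Gamma$ of the atom as an abstract $4$-valent graph with its formal opposite-edge structure at each vertex, together with the cyclic order $(a,b,c,d)$ of half-edges induced at every vertex by the surface $M$. Passing to a virtual diagram requires choosing a generic immersion $f\colon\Gamma\to\R^2$ which respects the formal opposite structure; the classical crossings sit at $f(\text{vertices})$ and the virtual crossings at the transverse double points of $f$ on edge interiors. The key local observation — already noted in the paragraph preceding the statement — is that at each vertex the planar immersion realises the cyclic order either as $(a,b,c,d)$ or as the reversed order $(a,d,c,b)$; these are the only two possibilities compatible with the opposite-edge relation, and switching between them is exactly a virtualisation of that classical crossing.

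The main step is then a standard ``any two immersions are regularly homotopic rel the opposite structure'' argument, combined with bookkeeping of what regular homotopy does to a virtual diagram. Concretely, I would argue as follows. Given $f_1$ and $f_2$ realising $L_1$ and $L_2$: (i) after composing $f_2$ with an ambient isotopy of $\R^2$ I may assume $f_1$ and $f_2$ agree on a small neighbourhood of each vertex \emph{up to} the local $(a,b,c,d)\leftrightarrow(a,d,c,b)$ ambiguity; applying one virtualisation at each vertex where they disagree, I reduce to the case where $f_1,f_2$ induce the \emph{same} cyclic order at every vertex. (ii) Now $f_1$ and $f_2$ are two generic immersions of $\Gamma$ into $\R^2$ agreeing near the vertices and inducing the same local data, so they are connected by a generic homotopy fixing neighbourhoods of the vertices. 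Along such a homotopy the only events are the births/deaths and slides of \emph{virtual} crossings and moves of edge-strands across virtual crossings and across the fixed vertex-neighbourhoods. Each such event is exactly one of the moves in the definition of a detour (or a planar isotopy). Hence $L_1$ and $L_2$ differ by detours once the vertex-orderings are matched, which combined with step (i) gives the claim.

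The hard part, and the place I expect to spend the most care, is step (ii): making precise that a generic homotopy of immersions rel the vertices decomposes into detour moves, i.e.\ that one never needs to change over/under information and never creates a forbidden move (a strand passing through a classical crossing). This is where genericity of the immersions and the fact that we work only up to detours is essential — it is the formal content of ``the most we can expect is well-definiteness up to detours''. I would handle it by stratifying the space of maps $\Gamma\to\R^2$ that are embeddings near the vertices, identifying the codimension-one walls (a tangency of two edge-strands, or a strand tangent to the boundary of a vertex-neighbourhood, or a triple point of edge-strands), and checking that crossing each wall effects a detour move or its inverse; since the classical crossings are frozen throughout, no move ever touches them, so no virtualisation and no change of crossing type is needed beyond those introduced in step (i). Everything else — genericity of $f_1,f_2$, existence of the homotopy, the reduction in (i) — is routine and I would only sketch it.
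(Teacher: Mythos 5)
Your proposal is correct and follows essentially the same route as the paper, which simply summarises the preceding discussion (the only freedom is the immersion of the frame, harmless up to detours, and the two local clockwise orders $(a,b,c,d)$ versus $(a,d,c,b)$ at each vertex, accounted for by virtualisation) and refers to \cite{MyBook} for details. Your step (ii) could even be shortened: since classical crossings occur only at vertices, once the vertex neighbourhoods are matched each edge-image of $f_1$ is an arc with only virtual crossings and can be replaced by the corresponding edge-image of $f_2$ directly by a detour move, with no need for the stratified-homotopy argument.
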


At the level of Gauss diagrams, virtualisation is the move that does
not change the writhe numbers of crossings, but inverts the arrow
directions. So, atoms just keep the information about signs of Gauss
diagrams, but not of their arrows.

A further simplification comes when we want to forget about the
signs and pass to flat virtual links (see also \cite{Vstrings}): in
this case we don't want to know which branch forms an overpass at a
classical crossing, and which one forms an underpass. So, the only
thing we should remember is its frame with opposite edge structure
of vertices (the {\em $A$-structure}). Having that, we take any atom
with this frame and restore a virtual knot up to virtualisation and
crossing change.

The $A$-structure of an atom's frame is exactly a $4$-valent framed
graph.

This perfectly agrees with the fact that {\bf free links are virtual
links modulo virtualization and crossing changes.}

Having a framed $4$-graph, one can consider {\em all atoms} which
can be obtained from it by attaching black and white cells to it. In
fact, it turns out that for a given framed $4$-graph either all such
surfaces are orientable or they are all non-orientable.

To see this, one should introduce the {\em source-sink} orientation.
By a {\em source-sink} orientation of a $4$-valent framed graph we
mean an orientation of all edges of this graph in such a way that
for each vertex some two opposite edges are outgoing, whence the
remaining two edges are incoming.

The following statement is left to the reader as an exercise
\begin{ex}
Let $G$ be a $4$-valent framed graph. Then the following conditions
are equivalent:

1. $G$ admits a source-sink orientation

2. At least one atom obtained from $G$ by attaching black and white
cells is orientable.

3. All atoms obtained from $G$ by attaching black and white cells
are orientable.

Moreover, if $G$ has one unicursal component, then each of the above
conditions is equivalent to the following:

Every chord of the corresponding Gauss diagram $C(D)$ is odd.

\end{ex}

We give two examples: for a planar $4$-valent framed graph we
present a source-sink orientation (left picture, Fig. \ref{lfr}),
and for a non-orientable $4$-valent framed graph (right picture,
Fig.\ref{lfr}, the artefact of immersion is depicted by a virtual
crossing) we see that the source-sink orientation taken from the
left crossing leads to a contradiction for the right crossing.

\begin{figure}
\centering\includegraphics[width=200pt]{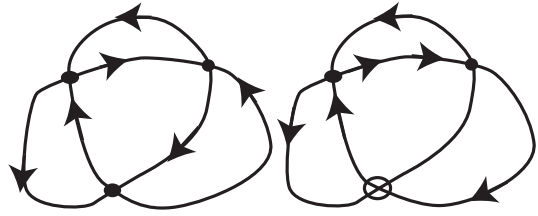} \caption{The
source-target condition} \label{lfr}
\end{figure}

\subsection{The sets ${\ZG}$ and ${\ZGG}$}.

Let ${\mathfrak{G}}$ be the set of all equivalence classes of framed
 graphs with one unicursal component modulo second Reidemeister moves.
Consider the linear space $\ZG$. By ${\ZGG}$ we denote the space of
all equivalence classes of framed graphs (with arbitrarily many
components) by the second Reidemeister move with all graphs with
free loops taken to be zero.

Having a framed $4$-graph, one can consider it as an element of
$\ZG$ or of $\ZGG$. It is natural to try simplifying it: we call a
graph in $\ZG$ {\em irreducible} if no decreasing second
Reidemeister move can be applied to it. We call a graph in $\ZG$
{\em irreducible} if it has no free loops and no decreasing second
Reidemeister move can be applied to it.

The following theorem is trivial

\begin{thm}
Every $4$-valent framed graph $G$ with one unicursal component
considered as an element of $\ZG$ has a unique irreducible
representative, which can be obtained from $G$ by consequtive
application of second decreasing Reidemeister moves.

Every $4$-valent framed graph $G$ considered as an element of $\ZGG$
is either equal to $0$ or has a unique irreducible representative.
In both cases, the reduction can be held by monotonous decreasing of
the diagram by using second Reidemeister move and, if at some point
one gets a free loop, the diagram is equal to zero.

\end{thm}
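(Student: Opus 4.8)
The plan is to prove this as a \emph{confluence / termination} statement for the rewriting system whose only rule is the decreasing second Reidemeister move (and, in the $\ZGG$ case, the additional collapse ``a diagram containing a free loop equals $0$''). Termination is immediate: each decreasing second Reidemeister move removes exactly two vertices, so any sequence of such moves starting from a graph with $n$ vertices has length at most $n/2$. Hence every graph reduces, in finitely many steps, to an irreducible one (or, in the $\ZGG$ case, to $0$). The whole content of the theorem is therefore \emph{uniqueness} of the irreducible representative, i.e.\ confluence, and by Newman's lemma it suffices to check \emph{local} confluence: if $G$ admits two distinct decreasing second Reidemeister moves $G \to G_1$ and $G \to G_2$, then $G_1$ and $G_2$ have a common descendant.

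So the core of the proof is a case analysis on how the two bigons being removed interact inside $G$. First I would set up the bookkeeping: a decreasing second Reidemeister move is determined by a bigon, i.e.\ an unordered pair of vertices $\{v,w\}$ together with a choice of two of the four arcs between them forming a $2$-gon whose removal is the inverse of a $2R$ addition; removing it repastes the edges and deletes $v,w$. Given two such bigons $B_1=\{v_1,w_1\}$ and $B_2=\{v_2,w_2\}$, the cases are: (i) $B_1$ and $B_2$ share no vertex; (ii) they share exactly one vertex; (iii) they share both vertices. In case (i) the two moves are performed in disjoint fragments of the graph, they commute outright, and $G_1 \to G'$, $G_2 \to G'$ by doing the other move — here one must just check that removing $B_1$ does not destroy $B_2$, which is clear since the repasting in one fragment leaves the other fragment's four arcs and two vertices intact, possibly only re-routing an arc that passes through the first fragment without changing its endpoints.

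Case (iii) is the degenerate one: if $B_1$ and $B_2$ use the same two vertices they must actually be the same bigon (two vertices connected by a bigon are connected by exactly one pair of ``parallel'' arcs realizing a decreasing move, by the opposite-edge condition at each vertex), so there is nothing to prove, or else the configuration is a two-vertex graph forming a circle-with-two-crossings, which reduces in one step either way to the same small diagram. The delicate case is (ii), a shared single vertex: here $B_1$ uses $\{v,w_1\}$ and $B_2$ uses $\{v,w_2\}$ with $w_1\neq w_2$. One has to verify that performing the $B_1$-move leaves $w_1$'s partner bigon structure — now relocated — still present so that a $B_2$-type move (with $v$ replaced by the vertex or arc it merged into) is available, and symmetrically; then both routes land on the same diagram. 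I expect this overlap case to be the main obstacle, and I would handle it by drawing the three or four local pictures of how the arcs around $v$ connect to $w_1$ and to $w_2$ and checking each; the framing (opposite-edge) constraint at $v$ sharply limits the combinatorics, and in every subcase the second move survives the first, giving local confluence.

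Finally, in the $\ZGG$ setting I would note that the free-loop collapse rule only ever \emph{adds} a way to reach $0$: if at some stage a free loop appears, the diagram is declared $0$ and $0$ is its own unique irreducible form, so the only new local-confluence obligation is ``if one branch creates a free loop, so does the other (eventually), or the other branch also reaches $0$.'' Since a free loop in $G_1$ not present in $G$ comes from a bigon-removal that closed up an arc into a loop, and the disjoint/overlap analysis above shows the other move can still be performed afterwards and commutes, both branches reach a diagram with that same free loop, hence both equal $0$. This completes local confluence; Newman's lemma upgrades it to global confluence, and termination then yields the unique irreducible representative, reached by any maximal sequence of decreasing moves — which is exactly the assertion of the theorem.
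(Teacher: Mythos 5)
Your overall framework (termination is obvious because each decreasing move deletes two vertices; uniqueness is confluence; by Newman's lemma it suffices to check local confluence of overlapping decreasing second Reidemeister moves, plus compatibility with the free-loop collapse in $\ZGG$) is sound, and it is in fact more than the paper offers: the paper states the theorem with the words ``the following theorem is trivial'' and gives no argument at all, so any correct write-up here must supply essentially the case analysis you outline. However, your treatment of the one case you yourself identify as the crux --- two bigons sharing exactly one vertex --- is not carried out, and the mechanism you predict for it is wrong. You claim that ``in every subcase the second move survives the first.'' It does not: if the bigons are $\{v,w_1\}$ and $\{v,w_2\}$, then the framing at $v$ forces the local picture to be two strands crossing each other at three consecutive vertices $w_1,v,w_2$ (the non-bigon half-edges at $v$ opposite to the $B_1$-edges are exactly the $B_2$-edges). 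Removing either bigon deletes $v$ together with one of $w_1,w_2$, destroys the other bigon, and need not leave any decreasing move available at the surviving vertex (the outgoing half-edges $c_1,c_2$ at $w_1$ and $d_1,d_2$ at $w_2$ may run off to four different far vertices, so the one-step reducts can already be irreducible). Local confluence holds for a different reason: the two one-step results are literally the same framed graph --- each is obtained by replacing the three-crossing two-strand tangle by a single crossing with identical attachments and identical opposite-edge pairing, the only difference being the name of the surviving vertex. A proof that rests on ``the second move is still applicable afterwards'' would therefore fail exactly where the argument is needed; the correct statement is ``the two reducts coincide (up to isomorphism), hence are trivially joinable.''

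A similar correction is needed in your case (iii): two distinct bigons on the same vertex pair $\{v,w\}$ do occur (three or four parallel edges between $v$ and $w$, e.g.\ a Hopf-link shadow sitting inside the diagram), so it is not true that the bigon is unique; but again both removals produce the same graph (the leftover parallel edge(s) close up into the same free loop(s) either way), so confluence survives, and in $\ZGG$ both branches are killed by the same free loop. With these two cases repaired --- replacing ``the other move survives'' by ``the two reducts are equal'' where appropriate --- your disjoint-bigon commutation argument, Newman's lemma, and the observation that the free-loop rule only adds a sink $0$ do assemble into a complete proof of both assertions, including the claim that any maximal sequence of decreasing moves reaches the unique irreducible representative (or a free loop, hence $0$, in $\ZGG$). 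As written, though, the pivotal verification is asserted rather than proved, and asserted incorrectly, so the proposal has a genuine gap.
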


This allows to recognize elements $\ZG$ and $\ZGG$ easily, which
makes the invariants constructed in the previous subsection
digestable.

In particular, the minimality of a framed $4$-graph in $\ZG$ or
$\ZGG$ is easily detectable: one should just check all pairs of
vertices and see whether any of them can be cancelled by a second
Reidemeister move (or in $\ZG$ one should also look for free loops).

Denote by $\ZGG_{k}$ the subspace of $\ZGG$ generated by
$k$-component free links.

\section{The Turaev cobracket}

There is a simple and fertile idea due to Goldman \cite{Goldman} and
Turaev \cite{Turaev} of transforming two-component curves into
one-component curves and vice versa.

Here we simplify Turaev's idea for our purposes and call it
``Turaev's $\Delta$''.

We shall construct a map from $\ZG$ to $\ZGG_{2}$ as follows.

In fact, to define the map $\Delta$, one may require for a free knot
to be oriented. However, we can do without.

Given a framed $4$-graph $G$. We shall construct an element
$\Delta(G)$ from $\ZGG_{2}$ as follows. For each crossing $c$ of
$G$, there are two ways of smoothing it. One way gives a knot, and
the other smoothing gives a $2$-component link $G_c$. We take the
one giving a $2$-component link and write

\begin{equation}
\Delta(G)=\sum_{c}G_{c}\in \ZGG_{2}
\end{equation}

\begin{thm}
$\Delta(G)$ is a well defined mapping from $\ZG$ to $\ZGG_{2}$
\end{thm}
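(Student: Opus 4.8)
The plan is to show that the formula $\Delta(G)=\sum_c G_c$ descends to a well-defined map on equivalence classes, i.e.\ that it is unchanged (as an element of $\ZGG_2$) when $G$ is altered by a second Reidemeister move. First I would settle the elementary point that for each crossing $c$ exactly one of the two smoothings raises the number of unicursal components from $1$ to $2$ (the other keeps it at $1$): this is the standard parity fact about smoothing a chord of a Gauss diagram, so $G_c$ is well defined, and $\Delta(G)$ is a well-defined element of $\ZGG_2$ \emph{for a fixed diagram} $G$. I would also note that $\Delta$ is defined on all of $\ZG$ by $\Z_2$-linearity once it is defined on diagrams, so the whole content is invariance under a single decreasing second Reidemeister move removing a bigon with vertices $v_1,v_2$ (the increasing move is the inverse).

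So let $G'$ be obtained from $G$ by deleting the bigon at $v_1,v_2$. I would split the crossings of $G$ into the two bigon crossings $v_1,v_2$ and the remaining crossings, which are in canonical bijection with the crossings of $G'$. For a crossing $c$ not equal to $v_1$ or $v_2$, the link $G_c$ is obtained from $G'_c$ by performing the same bigon on the corresponding fragment, hence $G_c$ and $G'_c$ are equal in $\ZGG_2$ (second Reidemeister move), \emph{provided} the component count is the same — and here one must check that which smoothing of $c$ produces the $2$-component link is not affected by the presence of the extra bigon, which is immediate since the bigon move preserves the number of unicursal components (a fact already recorded in the excerpt). Summing over $c\neq v_1,v_2$ gives $\sum_{c\neq v_1,v_2}G_c=\sum_{c'}G'_{c'}=\Delta(G')$ in $\ZGG_2$. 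It therefore remains to show that the two leftover terms $G_{v_1}+G_{v_2}$ vanish in $\ZGG_2$.

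This last step is the crux, and I expect it to be the main obstacle. The idea is a local analysis at the bigon: smoothing one of the two bigon vertices, say $v_1$, in the way that creates a new component, and examining the resulting diagram near $v_2$; one finds that $G_{v_1}$ and $G_{v_2}$ are either both diagrams containing a free loop (hence both $0$ in $\ZGG$), or they are related to each other by a second Reidemeister move, or one of them produces a free loop while the other is reducible — in every case $G_{v_1}=G_{v_2}$ in $\ZGG_2$, so over $\Z_2$ the sum $G_{v_1}+G_{v_2}$ is zero. I would carry this out by drawing the two chord-diagram pictures of the bigon (as in the figures referenced in the excerpt) and checking the handful of cases according to how the two bigon edges sit relative to the rest of the diagram; the delicate point is making sure the ``free loop $\Rightarrow 0$'' convention in $\ZGG$ is exactly what is needed to kill the degenerate cases, which is precisely why the target is $\ZGG$ (with free loops zero) rather than a naive link space. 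Combining the two parts, $\Delta(G)=\Delta(G')$, and by induction $\Delta$ is constant on each equivalence class, proving the theorem.
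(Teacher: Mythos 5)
Your overall skeleton for the second Reidemeister move is the same as the paper's: crossings away from the bigon match up bijectively and give summands equal in $\ZGG_{2}$ (removing the bigon is itself a second Reidemeister move on the smoothed link), so everything reduces to showing $G_{v_1}+G_{v_2}=0$. But there are two problems. First, the paper does not only check the second move: its proof treats all three Reidemeister moves (the first move contributes a summand containing a free loop, hence zero in $\ZGG_{2}$; for the third move the summands of the two sides agree up to second Reidemeister moves), and this is what is actually used later, when $\Delta$ is applied to the free knot $K_1$ and compared across arbitrary diagrams. Your literal reading of the domain as ``modulo second moves only'' leaves out the part of the argument the paper needs.

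Second, and more seriously, the crux step is only asserted, and the trichotomy you propose is not correct. The second move has two chord-diagram versions. In the interleaved version the two new summands are literally identical and cancel mod $2$ (this is the case the paper's one-line argument covers). In the parallel version (cyclic order $P_1P_2\dots Q_2Q_1$ with $c_1=(P_1,Q_1)$, $c_2=(P_2,Q_2)$), smoothing $c_1$ puts a loop (a kink at the surviving vertex $v_2$) on one of the two resulting components, while smoothing $c_2$ puts the kink on the other component; a kink is not a free loop and cannot be removed by second moves, so these two summands are in general neither identical, nor related by second moves, nor both zero. Concretely, take the circle with endpoints in cyclic order $P_1,P_2,d_1,d_2,Q_2,Q_1$ and chords $c_1=(P_1,Q_1)$, $c_2=(P_2,Q_2)$, $d=(d_1,d_2)$: smoothing $c_1$ produces a free loop (so that term is $0$), while smoothing $c_2$ produces the disjoint union of two one-vertex circles, which has no free loop, admits no decreasing second move, and hence is a nonzero irreducible element of $\ZGG_{2}$ by Theorem 1; it is not cancelled by any other term, since the remaining summands match termwise with those of the reduced diagram. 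So your case ``one produces a free loop while the other is reducible'' fails exactly here, and the proposed local analysis does not close the argument; note that you were right to flag this as the main obstacle, but settling it requires more than the sketched case check (for instance some further identification in the target, such as factoring by first moves as well, rather than $\ZGG_{2}$ as it stands).
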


The proof is standard and follows Turaev's original idea. One should
consider the three Reidemeister move. The first move adds a new
summand which has a free loop (the latter assumed to be trivial in
$\ZGG_{2}$); for the second Reidemeister move we get two new
identical summands, which cancel each other because we are dealing
with ${\bf Z}_{2}$ coefficients. For the third Reidemeister moves
the LHS and the RHS will lead to the summands identical up to second
Reidemeister moves.

{\em We call the conditions described above {\bf the parity
conditions.}}

\section{Two Types of Crossings: \\ Reducing all Reidemeister Moves to
the Second Reidemeister move}

Assume we have a certain class of knot-like objects which are
equivalence classes of {\bf diagrams} modulo {\em three Reidemeister
moves}. Assume for this class of diagrams (e.g. $4$-valent framed
graphs) there is a fixed rule of distinguishing between two types of
crossings (called even and odd) such that:

1) Each crossing taking part in the first Reidemeister move is even,
and after adding/deleting this crossing the parity of the remaining
crossings remains the same.

2) Each two crossings taking part in the second Reidemeister move
are either both odd or both even, and after performing these moves,
the parity of the remaining crossings remains the same.

3) For the third Reidemeister move, the parities of the crossings
which do not take part in the move remain the same.

Moreover, the parities of the three pairs of crossings are the same
in the following sense: there is a natural one-to-one correspondence
between pairs of crossings $A-A',B-B',C-C'$ taking part in the third
Reidemeister move, see Fig. \ref{abc}.

\begin{figure}
\centering\includegraphics[width=200pt]{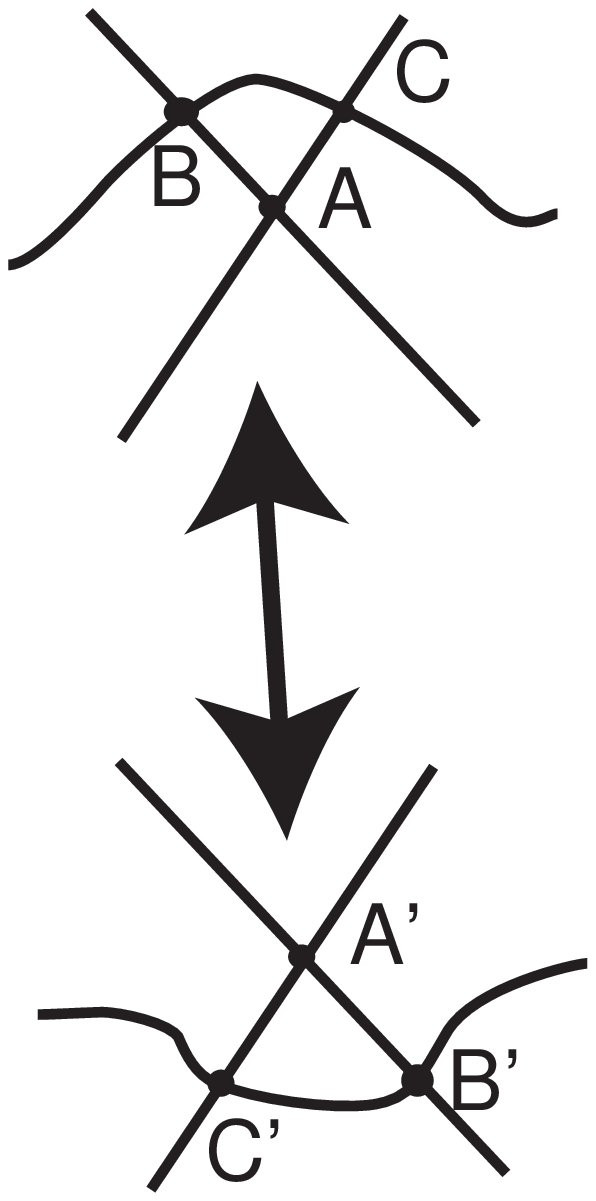}
\label{abc}
\end{figure}

We require that the {\em parity} of $A$ coincides with that of $A'$,
the {\em parity} of $B$ coincides with that of $B'$ and the parity
of $C$ coincides with that of $C'$.

We also require that the number of odd crossings among the three
crossings in question ($A,B,C$) is even (that is, is equal to $2$ or
$0$).

Having such objects with a prescribed rule satisfying the above
properties, one can define an invariant polynomials of our knot-like
objects (to be more precise, this leads to an invariant {\em
mapping} from knot-like objects to a terrifically simpler class of
objects).

In particular, this will lead us to one invariant of graph-knots and
one invariant of graph-links. The first invariant is introduced in
\cite{FreeKnots}.

It counts all {\em rotating circuits}, i.e. circuits going along all
the edges of the graph once and switching from each edge to a
non-opposite edge. Let us be more specific.

If one takes these circuits for a classical knot with an appropriate
signs and weights (powers of $t$), one would get the Alexander
polynomial.

Here, we introduce two ingredients: instead of circuits rotating at
all crossings, we let circles rotate only at ``even'' crossings, and
leave odd crossings as they are. Besides, instead of signs and
powers of $t$, we add some framed graphs modulo relations as
coefficients.

The scheme for the Kauffman bracket polynomial is the same with the
only difference that we count {\em all states} with weights being
either polynomials or chord diagrams: by a state we mean a way of
smoothing of all vertices (or.,resp., all odd vertices). The only
difference between a smoothing and a rotating circuit is that for a
smoothing the number of circles may be arbitrary.

\subsection{The ``Alexander-like'' bracket}

Let us be more specific. We concentrate on free knots and call a
vertex of a $4$-valent graph corresponding to a free knot {\em odd}
if and only if the corresponding chord of the chord diagram
corresponding to the framed graph is odd.

It is left for the reader as an exercise to check the {\bf parity
conditions}. We shall construct a map from free knots to $\ZG$.

Consider the following sum

\begin{equation}
[G]=\sum_{s\;even.,1\; comp} G_{s},
\end{equation}
which is taken over all smoothings in all {\em even} vertices, and
only those summands are taken into account where $G_{s}$ has one
unicursal component.

Thus, if  $G$ has $k$ even vertices, then $[G]$ will consist of at
most $2^{k}$ summands, and if all vertices of $G$ are odd, then we
shall have exactly one summand, the graph $G$ itself.

The ``Alexander-like'' {\em bracket} (to be denoted by $[\cdot ]$)
is defined as follows:

\begin{equation}
[G]=\sum_{s\;even.} G_{s},
\end{equation}

where $G_{s}$ is considered as an element in $\ZG$.

\begin{thm}(\cite{FreeKnots})
The mapping $G\mapsto [G]$ is well defined, i.\,e., $[G]$ does not
depend on the representative of the free knot corresponding to $G$.
\label{mainthm}
\end{thm}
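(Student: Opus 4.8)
The plan is to verify invariance of the bracket $[\cdot]$ under each of the three Reidemeister moves separately, using the \emph{parity conditions} stated earlier as the only structural input about how the parity of crossings behaves. The key observation is that the sum defining $[G]$ ranges over smoothings at \emph{even} vertices only, and odd vertices are left untouched; so I first need to check that applying a Reidemeister move to $G$ induces a natural bijection between the even-vertex states of $G$ and those of $G'$ (the result of the move), compatible with taking the class in $\ZG$.

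First I would treat the first Reidemeister move. By parity condition (1) the crossing being added or removed is even, and the remaining crossings keep their parities. A state of $G'$ either smooths the new even vertex the ``loop-creating'' way — producing a summand with a free loop, but such a summand is \emph{not} one-component in the required sense... more precisely, one of the two smoothings yields a diagram with an extra circle (two components), so it is discarded, and the other smoothing yields exactly the corresponding state of $G$. Hence the states biject and $[G]=[G']$. For the second Reidemeister move, by parity condition (2) the two participating crossings are either both even or both odd. If both are odd, they survive untouched in every state and the move simply relates $G_s$ and $G'_s$ by a second Reidemeister move, so they are equal in $\ZG$ by definition; the bijection of states is the identity on the remaining vertices. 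If both are even, then in each state we independently choose a smoothing at each of the two new vertices: two of the four choices reproduce a state of $G$ (the move is undone at the diagram level after smoothing), and the other two choices produce summands that either carry a free loop / extra component (discarded) or coincide and cancel in $\Z_2$ — here I must bookkeep the four cases carefully, but the cancellation is forced by the $\Z_2$ coefficients exactly as in the proof of well-definedness of $\Delta$.

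The third Reidemeister move is the substantive case and the one I expect to be the main obstacle. By the parity conditions, the three pairs $A$-$A'$, $B$-$B'$, $C$-$C'$ have matching parities, and the number of odd crossings among $A,B,C$ is $0$ or $2$. I would split into cases by this number. If all three of $A,B,C$ are even (hence $A',B',C'$ also even), then one smooths all three on each side; the claim is that, summand by summand, each state on the left side matches a state on the right side with the same underlying $\ZG$-class, using that after smoothing all three vertices the two local tangles agree (this is the well-known ``all-smoothed third move is trivial'' phenomenon), so the bijection is again essentially the identity on the untouched vertices. If exactly two of $A,B,C$ are odd — say $A$ is even and $B,C$ are odd — then in every state $B,B',C,C'$ persist, and only $A$ (resp.\ $A'$) is smoothed; one smoothing of $A$ makes the configuration reducible so that the left and right local pictures become related by a second Reidemeister move (equal in $\ZG$), and the other smoothing of $A$ likewise, so the two summands on the left match the two on the right. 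The heart of the argument is checking that, with $B,C$ present as honest crossings, the two smoothings of $A$ on the left-hand local picture are carried to the two smoothings of $A'$ on the right-hand picture by a planar isotopy together with second Reidemeister moves; this is the geometric lemma that makes the whole invariant work, and it is where I would draw the pictures and grind, rather than here.

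Finally, having established that each state-summand on one side of each move is sent to an equal element of $\ZG$ on the other side under an explicit bijection of states (modulo discarding multi-component smoothings and modulo $\Z_2$-cancellations), I conclude $[G]=[G']$ for all three moves, hence $[\cdot]$ descends to a well-defined map on free knots. I would remark that the same scheme is exactly the template invoked in the general ``two types of crossings'' discussion above, so this theorem is the prototype instance of that scheme; the only thing special to the present (classical-parity) setting is the verification of the parity conditions themselves, which was left as an exercise.
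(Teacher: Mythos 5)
Your overall scheme is the same as the paper's (the paper itself only sketches this theorem, deferring the details to \cite{FreeKnots}, but it carries out exactly this three-move, state-by-state analysis for the analogous bracket $\{\cdot\}$ in Theorem \ref{mainthm2}): biject states across a move, discard smoothings violating the one-component/free-loop condition, and cancel coinciding summands over $\Z_{2}$. However, there is a concrete flaw in your treatment of the third Reidemeister move with all three crossings even, which you dismiss as the ``all-smoothed third move is trivial'' phenomenon with a bijection that is ``essentially the identity.'' No such termwise bijection exists. Expanding the three crossings on each side gives eight local states per side, and the multisets of resulting local tangles are \emph{different}: on one side a certain connection type occurs three times while on the other side it occurs once (and vice versa for another type), and the single state containing a free loop is attached to different underlying tangles on the two sides. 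So a state of $G$ need not match any state of $G'$ with the same class in $\ZG$. The equality $[G]=[G']$ in this case is obtained exactly as in the paper's proof for $\{\cdot\}$: the free-loop state is excluded on both sides (it fails the one-component requirement), and within each side the triple of coinciding states reduces modulo $2$ to a single summand, after which the remaining five types correspond one-to-one (Fig. \ref{razved31}). In other words, the $\Z_{2}$-cancellation you invoke only for the second move is indispensable for the third move as well; without it the step you assert would fail. Ironically, the case you single out as ``the heart of the argument'' (two odd crossings, one even) is the easy one: one smoothing of the even crossing gives literally identical summands on the two sides, and the other gives summands differing by two second Reidemeister moves.

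A smaller bookkeeping slip: in the second Reidemeister move with two even crossings, of the four joint smoothings exactly \emph{one} reproduces the corresponding state of $G$, one produces the extra circle (discarded), and the remaining \emph{two} coincide and cancel over $\Z_{2}$; you state that two of the four reproduce states of $G$. You hedge this, and the mechanism you name (discard plus cancellation) is the right one, so this is easily repaired; but the third-move case above needs the actual multiplicity argument, not a bijection of states.
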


This theorem is proved in \cite{FreeKnots}. The idea behind the
proof relies on the comparison of diagrams obtained from each other
by Reidemeister moves with parity conditions taken into account.

We call a four-valent framed graph having one unicursal component
{\em odd} if all vertices of this graph are odd. We call an odd
graph {\em irreducibly odd} if for every two distinct vertices $a,b$
there exists a vertex $c\notin\{a,b\}$ such that $\langle
a,c\rangle\neq \langle b,c\rangle$.

Theorem \ref{mainthm} yields the following
\begin{crl}
Let  $G$ be an irreducibly odd framed 4-graph with one unicursal
component. Then any representative $G'$ of the free knot
$K_{G}$,generated by $G$, has a smoothing  $\tilde G$ having the
same number of vertices as $G$. In particular, $G$ is a minimal
representative of the free knot $K_{G}$ with respect to the number
of vertices.\label{sld}
\end{crl}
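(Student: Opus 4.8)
The plan is to deduce Corollary \ref{sld} directly from Theorem \ref{mainthm} by analyzing what the invariant $[\cdot]$ does on an irreducibly odd graph and what ``irreducibly odd'' buys us about irreducibility in $\ZG$. First I would observe that if $G$ has all vertices odd, then in the state sum defining $[G]$ there are no even vertices to smooth, so the only state is $G$ itself, and hence $[G] = G$ as an element of $\ZG$ (this is exactly the remark made just before the statement of Theorem \ref{mainthm}). Therefore, for any other representative $G'$ of the free knot $K_G$, we have $[G'] = [G] = G$ in $\ZG$. Now $[G']$ is by definition a $\Z_2$-linear combination of one-component smoothings of $G'$; for this sum to be equal, in $\ZG$, to the single graph $G$, at least one summand $G_s$ (a smoothing of $G'$ at some subset of its even vertices) must reduce, via decreasing second Reidemeister moves, to $G$. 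I would invoke the first part of Theorem 1 (uniqueness of the irreducible representative in $\ZG$) to make this precise: the class of $G$ in $\ZG$ has a unique irreducible representative, so the class of that summand $G_s$ must also have $G$-reduced as its irreducible representative, provided $G$ itself is irreducible in $\ZG$.

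So the second step is to show that an irreducibly odd graph $G$ is irreducible as an element of $\ZG$, i.e. admits no decreasing second Reidemeister move. A decreasing second Reidemeister move would cancel a pair of vertices $a, b$ that form a bigon; the combinatorial signature of such a cancellable pair is that $a$ and $b$ are ``parallel'' in the chord diagram — roughly, $\langle a, c \rangle = \langle b, c \rangle$ for every other chord $c$ (the two chords $a$ and $b$ link exactly the same set of other chords, and are positioned adjacently). But the irreducibly odd hypothesis says precisely that for every pair $a \neq b$ there is some $c \notin \{a,b\}$ with $\langle a, c \rangle \neq \langle b, c\rangle$, which forbids exactly this configuration. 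Hence no decreasing second Reidemeister move is available, and $G$ is irreducible in $\ZG$. Combining with the previous paragraph: $G_s$ reduces to $G$ by decreasing second Reidemeister moves, so in particular $G_s$ has at least as many vertices as $G$; but $G_s$ is a smoothing of $G'$, so $G'$ has at least as many vertices as $G_s$, hence as $G$. Taking $\tilde G = G_s$ gives the smoothing of $G'$ with the same number of vertices as $G$ (the reduction from $G_s$ to $G$ must then be trivial, since it is monotone decreasing and cannot decrease below $G$'s vertex count). Since this holds for every representative $G'$, the graph $G$ is a minimal representative of $K_G$.

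The main obstacle I expect is the second step: carefully identifying the combinatorial condition under which a pair of vertices in a one-component framed $4$-graph can be removed by a decreasing second Reidemeister move, and checking that this condition is exactly the negation of irreducible oddness. One has to be careful because a decreasing second Reidemeister move requires not just that $a$ and $b$ link the same chords, but that the two arcs between them on the circle are ``clean'' in the appropriate sense; however, in the free (unoriented, unsigned) setting, after the parity bookkeeping this reduces cleanly to the intersection-parity condition $\langle a, c\rangle = \langle b, c\rangle$ for all $c$, and any residual positional obstruction can itself be cleared by prior decreasing moves — so an irreducible graph in $\ZG$ with two chords linking the same set of others cannot exist. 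A minor secondary point is to make sure that the equality $[G'] = G$ in $\ZG$ really forces one summand to be $G$-reducible rather than the summands cancelling in pairs in a way that leaves $G$: since we work over $\Z_2$ and $G$ is irreducible, the coefficient of the class of $G$ on the right is $1$, so an odd number of summands $G_s$ reduce to $G$; in particular at least one does. I would also note the easy-to-overlook fact that $\langle a,b\rangle$ itself need not equal $\langle b,a\rangle$ is not an issue since the relation is symmetric, and that the ``one unicursal component'' hypothesis is what lets us work in $\ZG$ rather than $\ZGG$.
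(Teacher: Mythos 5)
Your overall route is the intended one, i.e. exactly how the corollary is meant to follow from Theorem \ref{mainthm}: since $G$ has no even vertices, $[G]=G$; invariance gives $[G']=[G]=G$ in $\ZG$ for any representative $G'$; a pair of vertices cancellable by a decreasing second Reidemeister move has adjacent endpoint pairs on the chord diagram and hence satisfies $\langle a,c\rangle=\langle b,c\rangle$ for every other chord $c$, so irreducible oddness forces $G$ to be irreducible in $\ZG$; and, working over $\Z_{2}$, the equality $[G']=G$ with $G$ a basis element forces an odd number (hence at least one) of the summands $G_{s}$ of $[G']$ to reduce to $G$ by decreasing second Reidemeister moves (Theorem 1). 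From $\#V(G')\ge \#V(G_{s})\ge \#V(G)$ you correctly obtain the minimality assertion. (Your side remark that \emph{any} two chords linking the same set of other chords must be cancellable, possibly after auxiliary moves, is not needed and should be dropped; only the implication ``bigon pair $\Rightarrow$ equal linking'' is used.)

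The genuine gap is your final identification $\tilde G=G_{s}$, justified by the parenthetical claim that ``the reduction from $G_{s}$ to $G$ must be trivial, since it is monotone decreasing and cannot decrease below $G$'s vertex count.'' That is a non sequitur: the monotone reduction terminates \emph{at} $G$'s vertex count, so nothing prevents $G_{s}$ from having strictly more vertices than $G$ and reducing nontrivially. Concretely, if $G'$ is obtained from $G$ by an increasing second Reidemeister move whose two new crossings are odd, then $G'$ has no even vertices at all, the unique summand of $[G']$ is $G_{s}=G'$ itself, and it has two more vertices than $G$; your choice of $\tilde G$ then violates the ``same number of vertices'' claim. The first assertion of the corollary can still be recovered: a decreasing second Reidemeister move cancelling vertices $a,b$ of a bigon is realized by smoothing $a$ and $b$ in the way that routes each external strand through the bigon (one checks that this reconnection reproduces exactly the reduced graph, with no free loop appearing), and smoothings at disjoint sets of vertices compose into a single smoothing; hence $G$ itself occurs as a smoothing of $G_{s}$, and therefore of $G'$, which supplies a smoothing $\tilde G$ with precisely $\#V(G)$ vertices. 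With that replacement your argument is complete and agrees with the derivation of the corollary from Theorem \ref{mainthm} given in \cite{FreeKnots}.
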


The simplest example of an irreducibly odd graph is depicted in Fig.
\ref{irred}.

\begin{figure}
\centering\includegraphics[width=200pt]{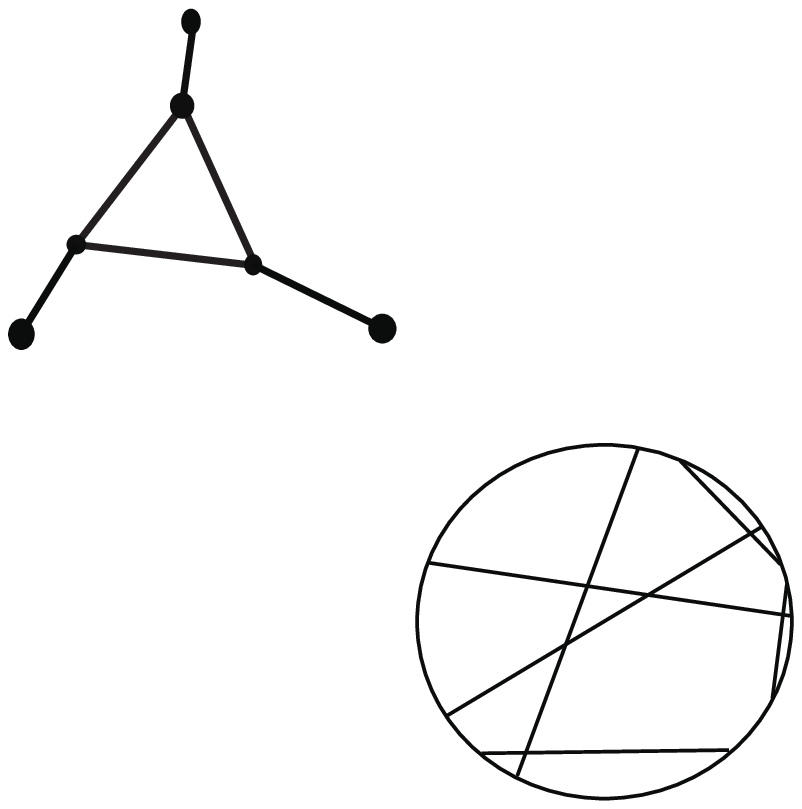} \caption{An
irreducibly odd graph and its chord diagram} \label{irred}
\end{figure}

\section{The Kauffman-like bracket}

Thus,  we have proved that the free knot $K$ having Gauss diagram
shown in Fig.\ref{irred} is minimal: every diagram of this knot has
at least $6$ odd veritces.

The reason is that $[K]$ consists of one diagram representing $K$
itself, and $K$ is not simplifiable in the category $\ZG$.

However, this argument is not applicable to free knots with no odd
crossings. Thus, we give two more examples using ``Kauffman bracket
like'' techniques for other free knots.

Let $K$ be a two-component free considered as an element from
$\ZGG_{2}$. We shall construct a map $\{\cdot\}: K\mapsto \{K\}$
valuend in $\ZGG$ as follows.

Take a framed four-valent graph $G$ representing $K$. By definition,
it has two components. Now, a vertex of $G$ is called {\em odd} if
it is formed by two different components, and {\em even} otherwise.

{\bf The parity conditions can be checked straightforwardly}.

Now, we define

\begin{equation}
\{G\}=\sum_{s} G_{s},
 \label{kbrck}
\end{equation}

where we take the sum over all smoothings of all even vertices, and
consider the smoothed diagrams $K_{s}$ as elements of $\ZGG$. In
particular, we take all elements of $K_{s}$ with free loops to be
zero.

\begin{thm}
The bracket $\{K\}$ is an invariant of two-component free links,
that is, for two graphs $G$ and $G'$ representing the same
two-component free link $K$ we have $\{G\}=\{G'\}$ in $\ZGG$.
\label{mainthm2}
\end{thm}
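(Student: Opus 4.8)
The plan is to verify invariance of $\{G\}$ under each of the three Reidemeister moves applied to a two-component framed graph $G$, using the fact (stated above as "the parity conditions can be checked straightforwardly") that the even/odd labelling — even = self-crossing of one component, odd = inter-component crossing — satisfies the parity conditions (1)--(3). Since $\{G\}$ is defined by summing over all smoothings of all \emph{even} vertices, odd vertices are left untouched, so the combinatorics is governed entirely by how a Reidemeister move interacts with the even vertices and with the component structure. I would treat this exactly as in the proof of Theorem~\ref{mainthm} (from \cite{FreeKnots}), the only new point being that smoothings now live in $\ZGG$ (arbitrarily many components, free loops $=0$) rather than in $\ZG$.

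\textbf{First Reidemeister move.} The crossing involved is even by the parity conditions. Smoothing it one way reproduces the original diagram (minus the loop) and the other way produces a diagram with an extra free loop, which is $0$ in $\ZGG$. Hence the contribution of all states of the $R_1$-enlarged diagram equals the contribution of all states of the original, and the parities of the remaining vertices are unchanged, so the two state sums agree term by term.

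\textbf{Second Reidemeister move.} By the parity conditions the two new crossings $p,q$ are of the same type. If both are odd, they are never smoothed; in the sum over even-vertex smoothings of the rest, each state of $G$ appears together with the same state of $G'$ carrying the bigon, and the bigon on two strands is removable by an $R_2$ move in $\ZGG$, so the summands are pairwise equal in $\ZGG$. If both $p,q$ are even, then for each fixed state of the remaining even vertices we sum over the four smoothings of $(p,q)$: two of these give diagrams that differ from the corresponding state of $G$ by a removable bigon, and the other two give a diagram with a free loop split off (hence $0$ in $\ZGG$), OR they cancel in pairs over $\mathbb{Z}_2$ — I would check which of these two cancellation mechanisms occurs by inspecting Fig.~\ref{2r} and the two smoothing types of Fig.~\ref{smooth}. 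Either way the net contribution reduces to that of $G$.

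\textbf{Third Reidemeister move.} This is the main obstacle, as usual. Here I would use that among the three crossings $A,B,C$ the number of odd ones is even (i.e.\ $0$ or $2$), together with $\mathrm{parity}(A)=\mathrm{parity}(A')$ etc. One organizes the state sum of the RHS and of the LHS according to the smoothings of whichever of $A,B,C$ are even: an even crossing among these three gets smoothed, an odd one stays. The case of all three even is precisely the computation already carried out for the $[\cdot]$ bracket in \cite{FreeKnots} (the local tangles before and after $R_3$ have smoothing-expansions that match term by term up to $R_2$, now in $\ZGG$), and one checks it survives verbatim since $R_2$ is available in $\ZGG$ and free loops only make extra terms vanish. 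In the case of exactly two odd crossings, only one of $A,B,C$ is smoothed; one smoothing of that even crossing gives locally an $R_2$-pair on the two remaining (odd) strands — removable in $\ZGG$ — and the matching on the LHS is the mirror situation, so LHS and RHS again agree up to $R_2$ moves. I expect the bookkeeping of which component is which in the two-odd-crossings case to be the fiddly part: one must confirm that the ``before'' and ``after'' local pictures, once the unique even crossing is smoothed each of its two ways, are related by second Reidemeister moves and free-loop deletions, matching summand-for-summand; this follows from the same planar/combinatorial analysis that underlies Theorem~\ref{mainthm}, now applied in $\ZGG$ with the loop-killing convention, and the $\mathbb{Z}_2$-coefficients absorb any duplicated summands. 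Finally I would remark that all the non-smoothed (odd) vertices keep their parity and their component-incidence data under each move by the parity conditions, so the map $\{\cdot\}$ is well defined on $\ZGG_2$.
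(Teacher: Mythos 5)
Your proposal is correct and follows essentially the same route as the paper: case analysis over the three Reidemeister moves, using the component-based parity of the new crossings, with free loops killed in $\ZGG$, $\mathbb{Z}_2$-cancellation of duplicated states, and termwise matching of the remaining states up to second Reidemeister moves (including the all-even and two-odd-one-even subcases of $\Omega_3$). The only hedge you left open, the even--even $R_2$ case, resolves exactly as you suspect: of the four smoothings of the bigon pair, one produces a free loop (hence $0$), the two mixed smoothings give identical diagrams and cancel modulo $2$, and the remaining one corresponds termwise to the states of $G$.
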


\begin{proof}
The proof is very similar to that of Theorem \ref{mainthm}. Indeed,
we have to consider two diagrams that differ by a Reidemeister move
and show that the corresponding brackets $\{\cdot\}$ are equal in
$\ZGG$.

Let us check the invariance $[G]\in \ZGG$ under the three
Reidemeister moves.

Let  $G'$ differ from  $G$ by a first Reidemeister move, so that
$G'$ has one vertex more than $G$. By definition this vertex is even
(it is formed by one component), and when calculating $[G']$ this
vertex has to be smoothed in order to get one unicursal curve in
total.

Thus, we have to take only one of two smoothings of the given
vertex, see Fig. \ref{razved}

\begin{figure}
\centering\includegraphics[width=200pt]{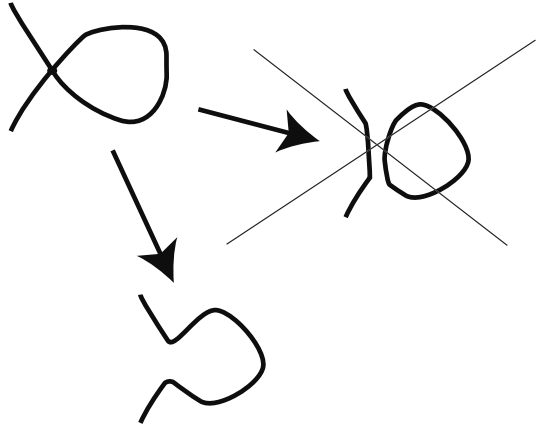} \caption{The two
smoothings --- good and bad --- of a loop} \label{razved}
\end{figure}

Thus there is a natural equivalence between smoothings of $G$ having
one unicursal component, and smoothings of $G'$ with one unicursal
component. Moreover, this equivalence yields a termwise identity
between $[G]$ and $[G']$.

Now, let  $G'$ be obtained from $G$ by a second Reidemeister move
adding two vertices.

These two vertices are either both even or both odd (that is, two
branches belong to the same component of the free link or to
different components).

If both added vertices are odd, then the set of smoothings of $G$ is
in one-to-one correspondence with that of $G'$ and the corresponding
summands for $[G]$ and for $[G']$ differ from each other by a second
Reidemeister move.

If both vertices are odd then one has to consider different
smoothings of these vertices shown in. \ref{razved2}.

\begin{figure}
\centering\includegraphics[width=300pt]{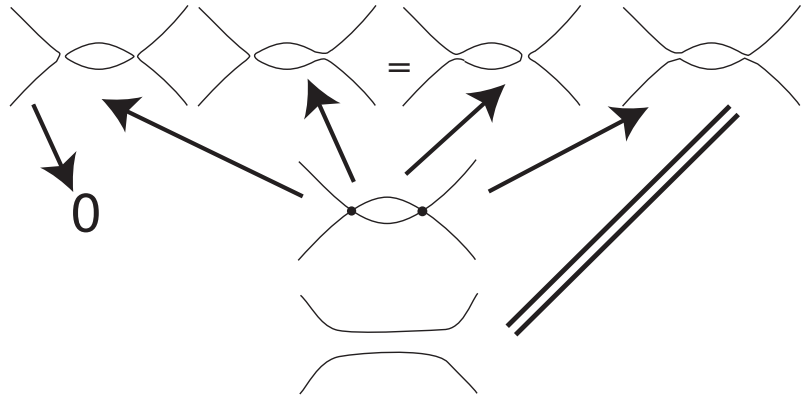}
\caption{Smoothings of two even vertices} \label{razved2}
\end{figure}

The smoothings shown in the upper-left Fig. \ref{razved2}, yield to
free loops, so they do not count in $[G']\in \ZGG$.

The second-type and third-type smoothings (the second and the third
pictures in the top row of \ref{razved2}) give the same impact to
 $\ZG$, thus, they reduce in
$[G']$. Finally, the smoothings corresponding to the upper-right
Fig. \ref{razved2} are in one-to-one correspondence with smoothings
of $G$, thus we have a term-wise equality of terms $[G]$ and those
terms of $[G']$, which are not cancelled by comparing the two middle
pictures.

If $G$ and $G'$ differ by a third Reidemeister move, then the
following two cases are possible: either all vertices taking part in
the third Reidemeister move are even, or two of them are odd and one
is even.

If all the three vertices are even, there are seven types of
smoothings corresponding to $[G]$ (and seven types of smoothings
corresponding to $[G']$): in each of the three vertex we have two
possible smoothings, and one case is ruled out because of a free
loop. When considering $G$, three of these seven cases coincide
(this triple is denoted by $1$), so, in $\ZG$ it remains exactly one
of these two cases. Amongst the smoothings of the diagram $G'$, the
other three cases coincide (they are marked by $2$). Thus, both in
$[G]$ and $[G']$ there are five types of summands marked by
$1,2,3,4,5$.

These five cases are in one-to-one correspondence (see Fig.
\ref{razved31}) and they yield the equality  $[G]=[G']$).

\begin{figure}
\centering\includegraphics[width=200pt]{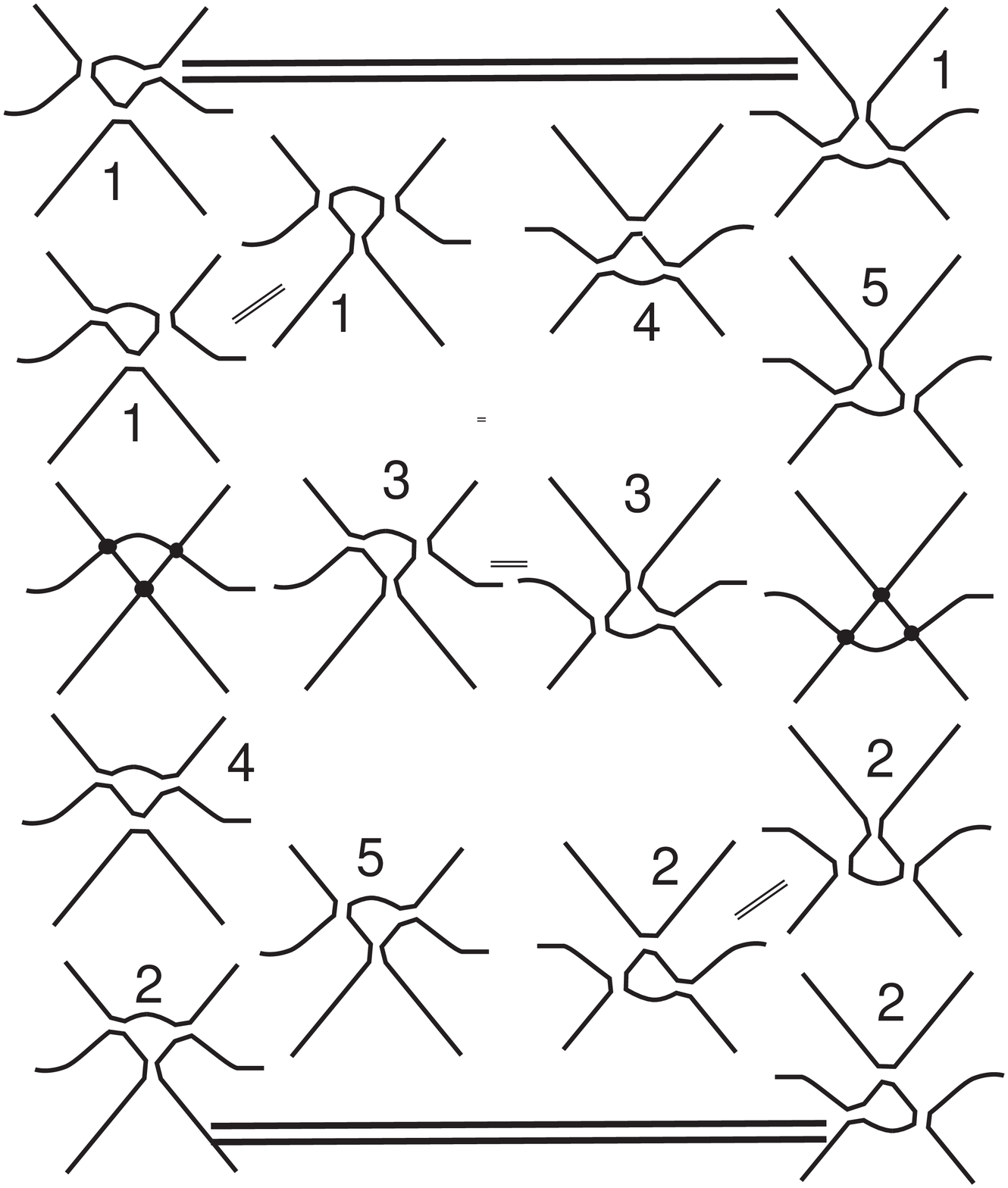}
\caption{Correspondences of smoothings with respect to $\Omega_3$
with three even vertices} \label{razved31}
\end{figure}

If amongst the three vertices taking part in $\Omega_3$ we  have
exactly one even vertices (say $a\to a'$), we get the situation
depicted in Fig. \ref{razved32}.

\begin{figure}
\centering\includegraphics[width=200pt]{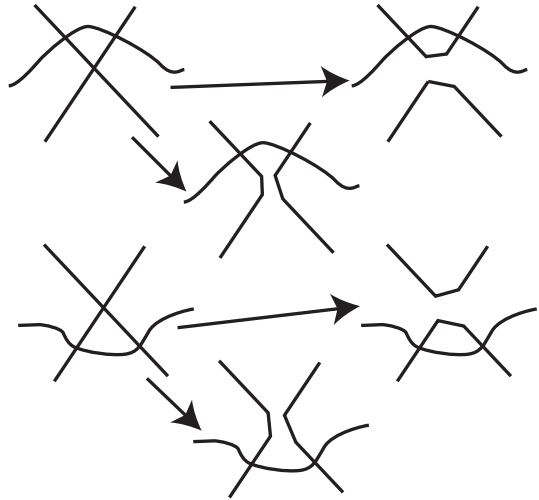}
\caption{Correspondence between smoothings for $\Omega_3$ with one
even vertex} \label{razved32}
\end{figure}

From this figure we see that those smoothings where $a$ (resp.,
$a'$) is smoothed {\em vertically}, give identical summands in $[G]$
and in $[G']$, and those smoothings where $a$ and $a'$ are smoothed
{\em horizontally}, are in one-to-one correspondence for $G$ and
$G'$, and the corresponding summands are obtained by applying two
second Reidemeister moves. This proves that $[G]=[G']$ in $\ZG$.

\end{proof}

We extend $K$ to $\ZGG_{2}$ by linearity.

\section{New Examples}

\begin{st}
The free link $L_1$ shown in Fig. \ref{frlink1} is minimal and the
corresponding atom is orientable.
\end{st}

\begin{figure}
\centering\includegraphics[width=200pt]{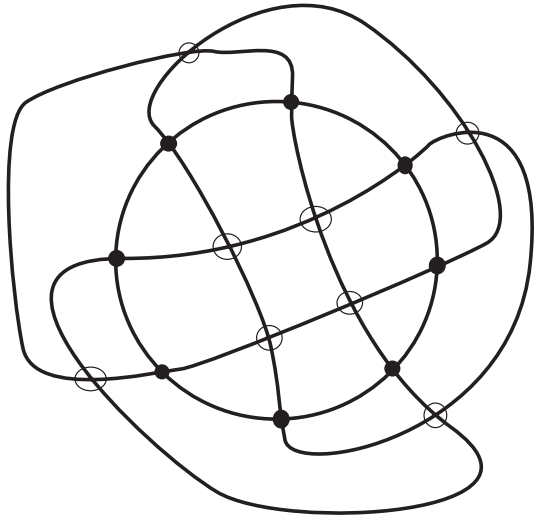}
\caption{Minimal free two-component link} \label{frlink1}
\end{figure}

\begin{proof}
The orientability of any of the corresponding atoms can be checked
straightforwardly: one can easily verify the source-sink condition.

To prove minimality, let us consider the bracket $\{L_1\}$. By
construction, $\{L_1\}$ consists of only one diagram, the diagram
$L_1$ itself. Since $L_1$ is minimal in $\ZG$, we see that for every
link $L'_1$ equivalent to $L_1$ there is a smoothing of $L'_1$ at
some vertices equivalent to $L_1$. So, $L'_1$ has at least eight
crossings.
\end{proof}

\begin{st}
The free knot $K_1$ shown in Fig. \ref{frknot1} is minimal.
\end{st}

\begin{figure}
\centering\includegraphics[width=200pt]{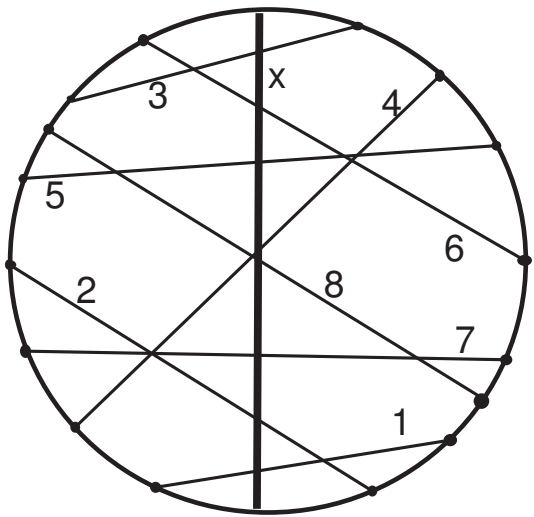} \caption{A
Minimal Free Knot Gauss Diagram} \label{frknot1}
\end{figure}

\begin{proof}
Consider $\Delta(K_1)$. By construction, it consists of nine
summands, each of which representing a two-component free link.
These summands are constructed by smoothing exactly one crossing
(one chord). If we smooth the chord $x$, we get the link $L_1$
depicted above. Thus, $\Delta(K_1)=L_{1}+\sum L_{i}$, where the all
$L_{i}$'s are two-component links.

We claim that none of the links $L_{i}$ is equivalent to $L_1$ as a
free link.

Indeed, the initial Gauss diagram of $K_1$ has $9$ chords, and there
is only one chord $x$ which is linked with any other chords. Thus,
if we smooth the diagram along a chord distinct from $x$, we would
get a $2$-component link, say, $L_i$ with at least one crossing
formed by one and the same component. Thus, by definition of
$\{\cdot\}$, we see that $\{L_{i}\}$ is a sum of diagrams having
strictly less than $8$ crossings. Consequently, $L_{i}\neq L_{1}$.

Thus we have proved that for $\{\Delta(K_1)\}$ has exactly one
diagram with minimal crossing number $8$. Taking into account the
invariance of $\{\cdot\}$, we see that $K_1$ has at least $9$
crossings.

Moreover, we have indeed proved that every diagram of $\Delta$ has
at least one smoothing equivalent to $L_1$.

\end{proof}

\section{Post Scriptum. Odds and Ends.}

In \cite{FreeKnots} we gave an example of a looped graph (or
graph-knot,  for definition see \cite{TZ,IM}) which has no
realisable representative, i.\,e. which is not equivalent to any
virtual knot. However, this graph was consisting of only {\em odd
vertices}, i.\,e., each vertex has odd valency.

Below we present an example of a looped graph with all vertices of
even valency, which has no realizable representative.

To do that, let us analyse the example shown in Fig. \ref{frknot1}.
This Gauss diagram has all even chords (this guarantees the
orientability of the corresponding atom). There is exactly one chord
which is linked with any of the remaining chords: the chord $x$.
This guarantees that the smoothing at $x$ gives a $2$-component link
where every crossing is formed by $2$ components, whence the
smoothing at any other crossing gives a $2$ components gives a
$2$-component link having at least one crossing formed by one
component. This means, that if we then apply the bracket
$\{\cdot\}$, these remaining diagrams will lead to diagrams with
strictly smaller number of crossings (than eight).

Finally, there is no room to perform the second decreasing
Reidemeister move to the graph obtained by smoothing along $x$. This
is guaranteed by the fact that in the initial chord diagram there is
no pair of chords $a,b$ both distinct from $x$ and such that

For reader's convenience, the intersection graph of this chord
diagram looks as shown in Fig. \ref{xx}.

\begin{figure}
\centering\includegraphics[width=200pt]{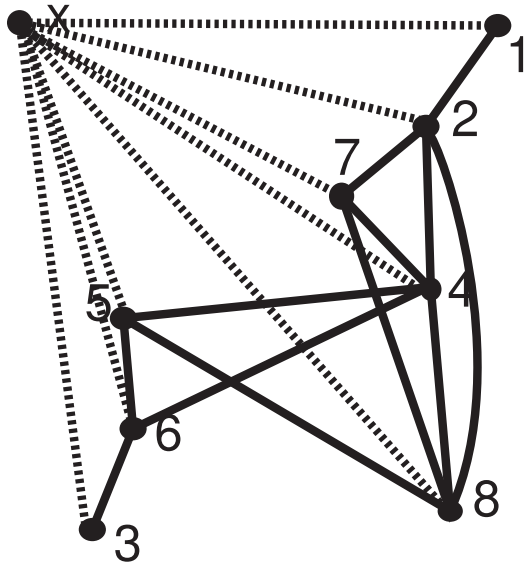} \caption{The
intersection graph of the minimal chord diagram} \label{xx}
\end{figure}

Quite analogously, one considers the looped graph in the sense
\cite{TZ} with ``Gauss diagram intersection graph'' given by the
non-realizable graph shown in Fig. \ref{xxx}.

\begin{figure}
\centering\includegraphics[width=200pt]{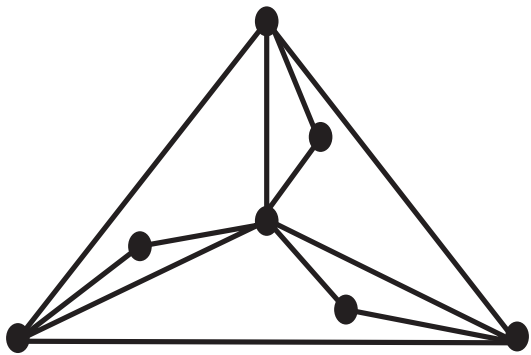} \caption{The
intersection graph a non-realizable knot} \label{xxx}
\end{figure}

All vertices of this diagram have even valency, and there is exactly
one vertex connected to all the remaining vertices. Thus, applying
Turaev's $\Delta$ to it, one gets $7$ $2$-component diagrams:
$A+\sum_{i}B_{i}$, exactly one of which has all ``even crossings'',
that is, only for one diagram $A$ every crossing belongs to both
components of the two component link. For any other diagram $B_{i},
i=1,\dots, 6$, there is at least one crossing formed by branches of
the same component.

So, the diagram $A$ is not cancelled by any of $B_{i}$'s. One easily
checks that $A$ is non-realizable, thus, the looped graph with Gauss
diagram shown in Fig. \ref{xxx} has no realizable representative.


\begin{thebibliography}{100}


\bibitem{Fom} Fomenko A. T.
(1991), The theory of multidimensional integrable hamiltonian
systems (with arbitrary many degrees of freedom). Molecular table of
all integrable systems with two degrees of freedom, {\em Adv. Sov.
Math}, {\em 6}, pp.  1-35.

\bibitem{Goldman} Goldman, W. M., Invariant functions on Lie groups and Hamiltonian
flows of surface group representations. {\em Invent. Math.} 85 , no.
2,(1986) 263-302

\bibitem{IM} Ilyutko, D.P, Manturov, V.O., Introduction to
Graph-Link Theory, ArXiv:Math.GT/0810.5522

 \bibitem{KaV}
L.\,H.~Kauffman, Virtual Knot Theory, {\em Eur. J. Combinatorics}
1999. {\bf 20} (7), pp.\ 662--690.

\bibitem{FreeKnots} V.O.Manturov, {\em On Free Knots},
ArXiv:Math.GT/0901.2214

\bibitem{MyBook} V.\,O.~Manturov (2004), {\em Knot Theory}, Champan and
Hall/CRC, Boca Raton, 416 pp.

\bibitem{TZ}
L.~Traldi, L.~Zulli, A bracket polynomial for graphs, math.\,
arxiv:0808.3392.

\bibitem{Turaev} V.G.Turaev, Skein quantization of Poisson algebras of
loops on surfaces. {\em Ann. Sci. Ecole Norm. Sup.} (4) 24, no. 6,
(1991) 635–704.

 \bibitem{Turg}
V.G.~Turaev, A simple proof of the Murasugi and Kauffman theorems on
alternating links (1987), {\em L'Enseignement Math\'ematique}, {\bf
33}, pp.\, 203--225.

\bibitem{Vstrings} V.G.Turaev, Virtual Strings and Their Cobordisms,
ArXiv:Math.GT/0311185


\end{thebibliography}
\end{document}